\titleformat{\subsection}[runin]{\normalfont\itshape}{\thesubsection\hspace{8pt}}{3pt}{}[.] 
\newcommand\NN{\mathbb{N}} 
\newcommand\ZZ{\mathbb{Z}} 
\newcommand\RR{\mathbb{R}} 
\newcommand\EE{\mathbb{E}} 
\newcommand\PP{\mathbb{P}} 
\renewcommand\P{\mathcal{P}} 
\newcommand\Q{\mathcal{Q}} 
\newcommand\ind{\mathbf{1}} 
\newcommand\law{\mathcal{L}}
\newcommand\TV{\operatorname{TV}}
\renewcommand\mid{\,|\,} 
\newcommand\Ro{\mathcal{R}_0} 
\newtheorem{theorem}{Theorem}
\newtheorem{lemma}[theorem]{Lemma}
\newtheorem{proposition}[theorem]{Proposition}
\theoremstyle{definition}
\newtheorem{remark}[theorem]{Remark}
\newtheorem{example}[theorem]{Example}
\begin{document}

\title{SIR model with social gatherings}
\author{Roberto Cortez\footnote{Universidad Andres Bello, Departamento de Matem\'aticas, Sazié 2212, sexto piso, Santiago, Chile. E-mail: \texttt{roberto.cortez.m@unab.cl}. Supported by Iniciaci\'on Fondecyt Grant 11181082.}}

\maketitle

\begin{abstract}
	We introduce an extension to Kermack and McKendrick's classic susceptible-infected-recovered (SIR) model in epidemiology, whose underlying mechanism of infection consists of individuals attending randomly generated social gatherings. This gives rise to a system of ODEs where the force of infection term depends non-linearly on the proportion of infected individuals. Some specific instances yield models already studied in the literature, to which the present work provides a probabilistic foundation. The basic reproduction number is seen to depend quadratically on the average size of the gatherings, which may be helpful to understand how restrictions on social gatherings affect the spread of the disease. We rigorously justify our model by showing that the system of ODEs is the mean-field limit of the jump Markov process corresponding to the evolution of the disease in a finite population.
\end{abstract}

\textbf{Keywords:} SIR model, epidemiology, social distancing, social gatherings, reproduction number, jump Markov process, mean-field limit

\section{Introduction}

\subsection{Classic SIR model}
\label{sec:classic_SIR}

The \emph{susceptible-infected-recovered (SIR) model}, introduced in 1927 by Kermack and McKendrick \cite{kermack-mckendrick1927}, is a simple system of ODEs representing the evolution of the spread of an infectious disease in a large population. It belongs to a broader class known as \emph{compartmental models in epidemiology}. More specifically:
\begin{equation}
\label{eq:classic_SIR}
\begin{split}
s' &= - \beta s i, \\
i' &= \beta s i - \gamma i, \\
r' &= \gamma i,
\end{split}
\end{equation}
where the prime denotes derivative with respect to time $t\geq 0$. Here, $s = s_t$, $i = i_t$ and $r = r_t$ denote the proportion of the population that is \emph{susceptible} to the disease, \emph{infected}, and \emph{recovered} (or \emph{removed}), respectively; thus, $s+i+r = 1$. These labels are referred to as \emph{compartments}.

The rationale behind \eqref{eq:classic_SIR} is as follows. Informally, the size of the population is assumed to be infinite\footnote{In the literature, the total number of individuals is typically finite and denoted $N$, and one works with (continuous versions of) the actual number of individuals in each compartment instead of their proportions, that is, $S = Ns$, $I = Ni$ and $R = Nr$.
However, in the present article, the case $N<\infty$ will refer to a Markov process representing the random evolution of the number of individuals of each type in the finite population, whose mean-field limit as $N\to\infty$ yields \eqref{eq:classic_SIR}. Thus, we will reserve the use of $N$ and the capital letters $S$, $I$, and $R$ for that setting.}. Encounters between pairs of individuals occur randomly among the population, and a susceptible individual can acquire the disease only when interacting with an infectious. The incidence rate, i.e., the global rate at which infections occur, is thus proportional to $s$, $i$, and a parameter $\beta$, corresponding to the average number of contacts per person per unit time, multiplied by the probability of disease transmission. Once infected, an individual can then spread the disease to other susceptibles via the same mechanism. After some time, the individual recovers and gains permanent immunity, and he or she can no longer infect others. The global rate of recovery is thus proportional to $i$ and $\gamma$, where $1/\gamma$ is the mean duration of the infectious period.

In the last decades, the model \eqref{eq:classic_SIR} and its variants have been used extensively to predict the evolution of disease spread, see for instance \cite{andersson-britton2000,capasso-serio1978,hethcote2000,liu-hethcote-levin1987}. In particular, during the ongoing COVID-19 pandemic, the SIR model has been used as a mathematical tool to study the effect of \emph{non pharmaceutical interventions} (\emph{NPIs}). Those are actions taken by authorities and individuals, apart from medical measures such as getting vaccinated and taking medicine, which aim to help slow the spread of the disease; see \cite{perra2021} for an extensive biomedical review. Examples of NPIs include wearing face masks, frequent hand washing, social distancing, bans on social gathering, lockdowns, border closures, etc. Mathematically, the effects of social distancing and other NPIs are typically modelled by modifying the term $\beta i$ in \eqref{eq:classic_SIR}, known as \emph{force of infection}, either by making $\beta$ depend on time or some additional variable (e.g.\ space), or by considering a non-linear dependence on $i$; see for instance \cite{cabrera-cordovalepe-gutierrezjara-vogtgeisse2021,cotta-naveiracotta-magal2020,kolokolnikov-iron2021,wangping-etal2020}.


\subsection{SIR model with social gatherings}
\label{sec:extended_SIR}

To the best of our knowledge, something that is currently missing in the literature of SIR-like models, is the study of the effect of restrictions on social gatherings. In previous works, it is implicitly assumed that these restrictions are considered as part of the social distancing effect, or even as part of all the NPIs as a whole, without any detailed description. Motivated by this, especially in the context of the current COVID-19 pandemic, our main goal is to introduce an extension of the classic SIR model \eqref{eq:classic_SIR} that accounts for the way social gatherings take place in the population and study their effects on the spread of the disease. More importantly, and this the main novelty of the present work, we provide an explicit probabilistic interpretation for the microscopic (i.e., person-to-person) mechanism of infection, in terms of individuals attending these social gatherings.

We now describe our model. Note that one of the underlying assumptions behind \eqref{eq:classic_SIR} is that encounters always take place between two individuals. In our extended setting, encounters, which we call \emph{gatherings}, can have any number of individuals. Specifically, in the simplest case where the size of the gatherings is some fixed number $\theta \in \{0,1,2,\ldots\}$, our model is given by the following system of ODEs:
\begin{equation}
\label{eq:extended_SIR}
\begin{split}
s' &= - \mu \theta s \left(1 - (1-p i)^{\theta-1} \right), \\
i' &= \mu \theta s \left(1 - (1-p i)^{\theta-1} \right) - \gamma i, \\
r' &= \gamma i.
\end{split}
\end{equation}
Here, $\mu>0$ is the rate at which gatherings occur, and $p\in[0,1]$ is the probability of transmission of the disease. Note that when $\theta = 2$, we recover the usual SIR model \eqref{eq:classic_SIR} with $\beta = 2\mu p$. The qualitative behaviour of \eqref{eq:extended_SIR} is similar to \eqref{eq:classic_SIR}: initially, there is a possible increase in the proportion of infected individuals, then it reaches a maximum, and then it decays to 0; see Lemma \ref{lem:properties_sir}. This can be appreciated in Figure \ref{fig:gSIR}, where we display the numerical solution of \eqref{eq:extended_SIR} for some values of the parameters.

\begin{figure}[t!]
	\includegraphics[width=\textwidth]{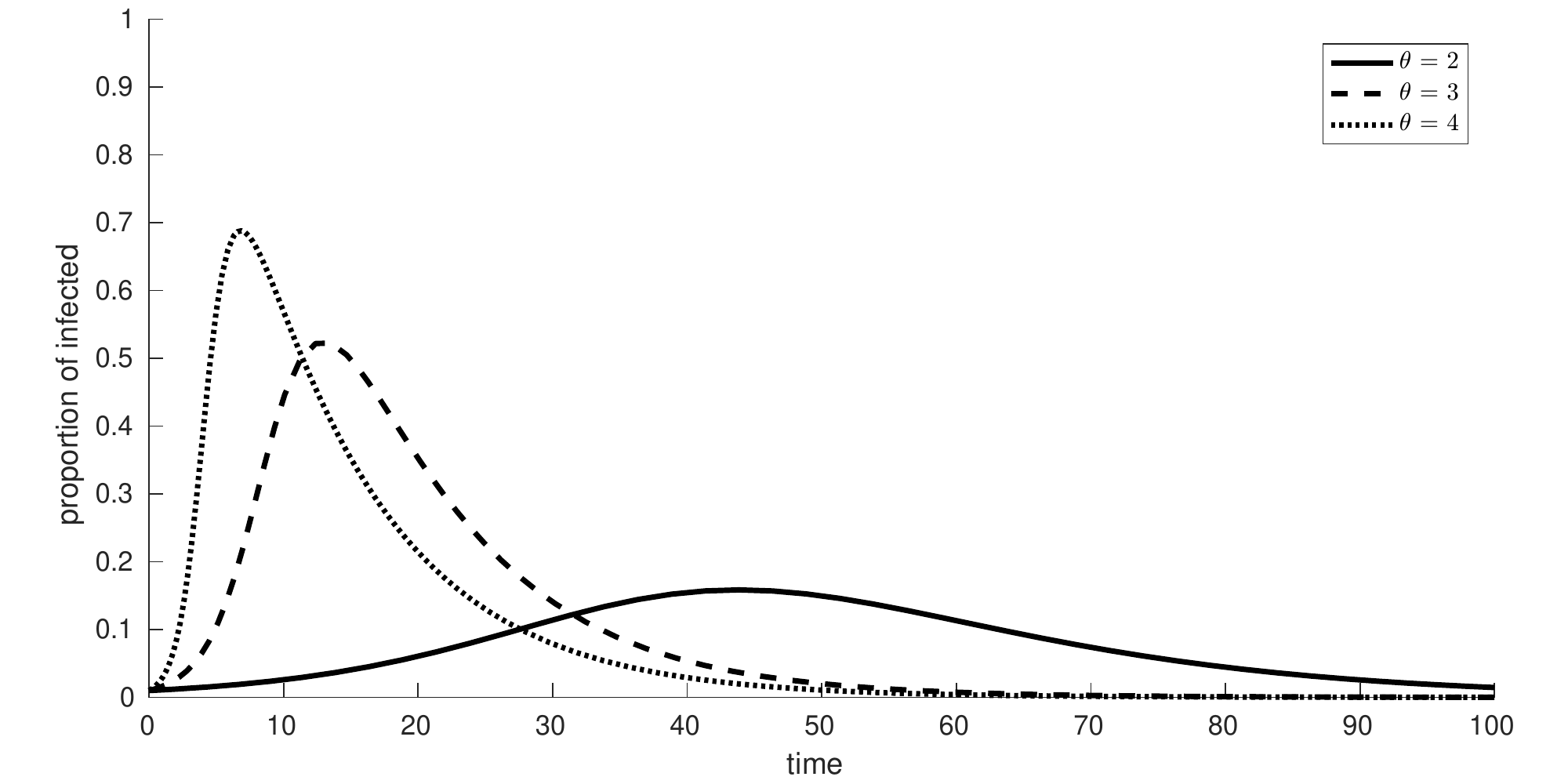}
	\caption{Evolution of the proportion of infected individuals $i$, obtained by solving numerically the SIR model with gatherings given by \eqref{eq:extended_SIR}. Parameters: $\mu = .5$, $p=.2$, $\gamma=.1$, $i_0 = .01$.}
	\label{fig:gSIR}
\end{figure}

The heuristic behind the proposed model is as follows:
\begin{itemize}
	\item An infected individual recovers at rate $\gamma$, as before.
	
	\item Gatherings occur at rate $\mu$ per individual.
	
	\begin{itemize}
		\item When a gathering takes place, randomly sample $\theta$ individuals from the infinite population. That is: independently, perform $\theta$ times the experiment of sampling from the set $\{\text{susceptible}, \text{infected}, \text{recovered}\}$ with respective probabilities $s$, $i$, and $r$.
		
		\item Each susceptible in the gathering will attempt to acquire the disease as many times as there are infectious individuals in the room, each time with probability $p$, independent of everything else. All those susceptibles that acquired the disease at least once, instantaneously and simultaneously become infected.
	\end{itemize}
	
\end{itemize}

The system \eqref{eq:extended_SIR} can be obtained from this description, see Section \ref{sec:heuristic_justification} for a more detailed but still informal derivation. A completely rigorous justification is provided in Section \ref{sec:LLN_SIR_model_gatherings}.

\subsection{Random size of gatherings}
\label{sec:random_size}
More generally, one can consider the case where the number of individuals attending each gathering is \emph{random}. That is, the sizes of the gatherings can be taken as independent copies of some random variable $\Theta$ on $\{0,1,2,\ldots\}$ with known distribution. Consequently, the incidence rate $\mu \theta s (1 - (1-p i)^{\theta-1} )$ in \eqref{eq:extended_SIR} (with $\Theta$ in place of $\theta$) is to be replaced by its expected value. Specifically, the system of ODEs is
\begin{equation}
\label{eq:extended_SIR_random}
\begin{split}
s' &= - \mu s B(i), \\
i' &= \mu s B(i) - \gamma i, \\
r' &= \gamma i,
\end{split}
\end{equation}
together with some initial condition $(s_0,i_0,r_0) \in [0,1]^3$ such that $s_0 + i_0 + r_0 = 1$, where the function $B:[0,1] \to \RR_+$ is defined as
\begin{equation}
\label{eq:Bi}
B(i) = \EE\left[ \Theta \left(1 - (1-p i)^{\Theta-1} \right) \right].
\end{equation}

In Section \ref{sec:properties} we study the main analytical properties of the system of ODEs \eqref{eq:extended_SIR_random}: well-posedness is provided in Lemma \ref{lem:properties_sir}, and in Lemma \ref{lem:comparison} we show that the disease spreads more slowly than in the classic SIR model \eqref{eq:classic_SIR}, provided that the basic reproduction number (recalled below) is the same.

Randomizing the size of the gatherings gives the model more flexibility. For instance, if authorities restrict social gatherings to have a maximum size $K$, then one can work with some $\Theta$ whose distribution is supported on $\{0,1,\ldots,K\}$ (e.g., discrete uniform). Moreover, if the distribution of $\Theta$ is such that the function $[0,1] \ni \xi \mapsto \EE[\xi^\Theta]$ has an explicit expression, then so does $\EE[\Theta \xi^{\Theta-1}] = \frac{d}{d\xi} \EE[\xi^\Theta]$; thus, the function $B(i)$ will have a closed form, leading to an explicit system of ODEs. This includes some frequently used distributions, such as the discrete uniform, geometric, negative binomial, etc. We highlight the following particular cases:

		
\begin{example}
When $\Theta \sim \text{binomial}(K,\alpha)$, it is easy to check that for all $\xi \in [0,1]$,
\begin{equation}
\label{eq:Exi^Theta}
\begin{split}
\EE[\xi^\Theta] &= \left(1 - \alpha (1-\xi) \right)^K
\quad \text{and}, \\
\EE[\Theta \xi^{\Theta-1}] &= K \alpha \left(1 - \alpha (1-\xi) \right)^{K-1},
\end{split}
\end{equation}
which gives $B(i) = K \alpha (1 - (1- \alpha p i)^{K-1} )$. The system of ODEs is (notice that when $\alpha = 1$, we recover \eqref{eq:extended_SIR}):
\begin{equation}
\label{eq:extended_SIR_binomial}
\begin{split}
s' &= - \mu K \alpha s \left(1 - (1- \alpha p i)^{K-1} \right), \\
i' &= \mu K \alpha s \left(1 - (1- \alpha p i)^{K-1} \right) - \gamma i, \\
r' &= \gamma i.
\end{split}
\end{equation}
\end{example}
	
\begin{example}
When $\Theta \sim \text{Poisson}(\lambda)$, it is straightforward to check that $\EE[\Theta \xi^{\Theta-1}] = \lambda e^{-\lambda (1-\xi)}$ for all $\xi \in [0,1]$, which then gives $B(i) = \lambda (1 - e^{-\lambda p i} )$. The system of ODEs is
\begin{equation}
\label{eq:extended_SIR_Poisson}
\begin{split}
s' &= - \mu \lambda s (1 - e^{-\lambda p i} ), \\
i' &= \mu \lambda s (1 - e^{-\lambda p i} ) - \gamma i, \\
r' &= \gamma i.
\end{split}
\end{equation}
Notice that an expression similar to $\mu \lambda s (1 - e^{-\lambda p i} )$ also appears in the model presented in \cite{kolokolnikov-iron2021}, although the underlying infection mechanism considered there is spatial in nature, and the formula with the exponential is obtained only after an approximation. The present article provides a different microscopic interpretation, by means of gatherings whose size follows a Poisson distribution, which justifies the use of a model like \eqref{eq:extended_SIR_Poisson}.
\end{example}

\begin{example}
	Consider the case where $\Theta$ has a \emph{logarithmic distribution} of parameter $\alpha \in (0,1)$, denoted $\Theta \sim \log(\alpha)$, that is,
	\[
	\PP(\Theta = k) = \frac{-1}{\log(1-\alpha)} \frac{\alpha^k}{k},
	\qquad \forall k = 1,2,\ldots.
	\]
	It is easy to check that $\EE[\Theta \xi^{\Theta-1}] = \frac{-1}{\log(1-\alpha)} \frac{\alpha}{1-\alpha \xi}$ for all $\xi \in [0,1]$, which, after a straightforward computation, gives
	\[
	B(i) = \frac{a i}{1 + b i},
	\qquad
	\text{for}
	\quad
	a = \frac{-\alpha^2 p}{(1-\alpha)^2 \log(1-\alpha)}
	\quad
	\text{and}
	\quad
	b = \frac{\alpha p}{1-\alpha}.
	\]
	The system of ODEs reads:
	\begin{equation}
	\label{eq:extended_SIR_logarithmic}
	\begin{split}
	s' &= - \frac{\mu a s i}{1+bi}, \\
	i' &= \frac{\mu a s i}{1+bi} - \gamma i, \\
	r' &= \gamma i.
	\end{split}
	\end{equation}
	This model, with incidence rate of the form $\frac{\mu a s i}{1+bi}$ for some constants $a,b$, was studied in \cite{capasso-serio1978} as the main example. One of its features is that it can be solved explicitly in the $s$-$i$ plane:
	\[
	i(s)
	= \begin{cases}
	-\frac{1}{b} + C s^\rho + \frac{s}{\rho - 1} & \rho \neq 1, \\
	-\frac{1}{b}  + Cs - s \log s & \rho = 1,
	\end{cases}
	\]
	where $\rho = \frac{\gamma b}{\mu a}$ and $C$ is a constant depending on the initial condition $(s_0, i_0)$; see \cite[Section 6]{capasso-serio1978} for more details. Again, the present article provides a probabilistic justification for the use of a model like \eqref{eq:extended_SIR_logarithmic}, by means of gatherings whose size follows a logarithmic distribution.
\end{example}

\begin{remark}
Our proposed mechanism of infection, by means of social gatherings, can be used to obtain variants of other compartmental models, such as SIRS, SIRV, SEIR, MSEIR, include birth and death, different distributions of the recovery time, etc.; see \cite{hethcote2000} for a review. To do so, one simply replaces $\beta s i$ (or the corresponding term representing the incidence rate) by $\mu s B(i)$, where $B(\cdot)$ is given by \eqref{eq:Bi}. For example, for the \emph{susceptible-exposed-infected-recovered (SEIR)} model, the corresponding extension with gatherings is
\[
\begin{split}
s' &= - \mu s B(i), \\
e' &= \mu s B(i) - \varepsilon e , \\
i' &= \varepsilon e - \gamma i, \\
r' &= \gamma i,
\end{split}
\]
where $e = e_t$ is the proportion of the population that has been exposed to the disease but hasn't become infectious yet, and the parameter $\varepsilon>0$ is the rate at which individuals change from exposed to infected. In the present article we study only the extension to the classic system \eqref{eq:classic_SIR}, because it is archetypal and arguably the most well-known model in this setting. Most of our developments can be easily adapted to other variants.
\end{remark}

\subsection{Reproduction number}
\label{sec:reproduction number}

The \emph{basic reproduction number} $\Ro$ is a crucial quantity in many epidemiological models. It is defined as the average number of new infections produced by an infected individual in a large population where almost everyone is susceptible to the disease. Its importance comes from the fact that the initial behaviour of the system (i.e., initial increase or decline of the small infected population) depends on whether the basic reproduction number exceeds the threshold value of 1. In other words, a major epidemic outbreak is possible if and only if $\Ro>1$.

Mathematically, $\Ro$ is the overall rate of new infections divided by the overall rate of recoveries, when $i \approx 0$, and $r = 0$ (then $s=1-i$). For \eqref{eq:classic_SIR}, this gives $\Ro = \beta/\gamma$. For our extended model \eqref{eq:extended_SIR_random}, we thus have
\[
\Ro
= \lim_{i\to 0} \frac{\mu (1-i) B(i)}{\gamma i}
= \frac{\mu B'(0)}{\gamma}.
\]
From \eqref{eq:Bi}, we see that $B'(i) = p \EE[\Theta (\Theta-1) (1-pi)^{\Theta-2}]$, and then
\[
\Ro
= \frac{\mu p \EE[\Theta(\Theta-1)]}{\gamma}.
\]

For instance:
\begin{itemize}
	\item when $\Theta \equiv \theta$ (model \eqref{eq:extended_SIR}), we have $\Ro = \mu p \theta(\theta-1) / \gamma$;
	
	\item if $\Theta \sim \text{binomial}(K,\alpha)$ (model \eqref{eq:extended_SIR_binomial}), then $\Ro = \mu p \alpha^2 K(K-1) / \gamma$;
	
	\item if $\Theta \sim \text{Poisson}(\lambda)$ (model \eqref{eq:extended_SIR_Poisson}), then $\Ro = \mu p \lambda^2 / \gamma$; and
	
	\item if $\Theta \sim \log(\alpha)$ (model \eqref{eq:extended_SIR_logarithmic}), then $\Ro = -\frac{\mu p \alpha^2}{\gamma (1-\alpha)^2 \log(1-\alpha)} = -\frac{\mu p\log(1-\alpha) \EE[\Theta]^2}{\gamma}$.
\end{itemize}

Notice that in these examples (and possibly many others) $\Ro$ grows quadratically with the average size of the gatherings. This fact may provide a new insight on the way that NPIs such as social distancing and lockdowns affect the spread of the disease. For instance, it may be a tool to determine how restriction policies on social gatherings should be defined; or it may be helpful to better understand why infection rates can rise significantly after lockdowns are lifted \cite{bruckhaus-martinez-garner-larocca-duncan2021}.

\subsection{Finite population stochastic dynamics and mean-field limit}
\label{sec:SIR_N}

The informal description of the classic SIR model \eqref{eq:classic_SIR} by means of
interactions between pairs of individuals in the infinite population, given in Section \ref{sec:classic_SIR}, can be made precise. To do so, one considers $N<\infty$ individuals subjected to these random encounters, giving rise to a jump Markov process on $\{0,\ldots,N\}^3$ corresponding to the evolution of the disease in the finite population. Compared to the completely deterministic system of ODEs, this stochastic process provides a more realistic representation of the complex random interactions that take place in a real-life epidemic. Nevertheless, despite the intrinsic randomness of this process, it can be shown that the proportions of susceptible, infected, and recovered individuals converge to the solution of \eqref{eq:classic_SIR}, in the limit as $N\to\infty$. This is a kind of law of large numbers, known as the \emph{mean-field limit} of the finite population dynamics. This type of result is essential, because it gives a full mathematical validation for the model. It was first proven in \cite{kurtz1970}, see also \cite[Chapter 11, Theorem 2.1]{ethier-kurtz} and \cite[Theorem 5.2]{andersson-britton2000}; for a more recent and elementary proof, see \cite{armbruster-beck2017}.

More generally, for our proposed model \eqref{eq:extended_SIR_random}, one can also study the finite population case. Let us thus consider the jump Markov process $X_t^N = (S_t^N, I_t^N, R_t^N)$ on $\{0,\ldots,N\}^3$ corresponding to the evolution of the number of susceptible, infected, and recovered individuals in the $N$-population, whose evolution consists of random social gatherings and recoveries, as described heuristically in Section \ref{sec:extended_SIR} for the case of an infinite population and $\Theta$ constant. Specifically, when $X_t^N$ is at state $(S,I,R)$, its dynamics is as follows:

\begin{itemize}
	\item At rate $\gamma I$, a recovery takes place: jump from $(S,I,R)$ to $(S,I-1,R+1)$.
	
	\item At rate $\mu N$, a gathering takes place:
	\begin{itemize}
		\item Sample the size of the gathering $\Theta$; if $\Theta>N$, nothing happens.
		
		\item Sample $\Theta$ individuals at random, without replacement, from the finite population; call $\tilde{S}^N, \tilde{I}^N, \tilde{R}^N$ the number of selected suceptibles, infected, and recovered (thus, $\tilde{S}^N + \tilde{I}^N + \tilde{R}^N = \Theta$).
		
		\item Each of the $\tilde{S}^N$ selected susceptibles will attempt to acquire the disease $\tilde{I}^N$ times, each time with probability $p$, independent of everything else. That is: if $J_k \in \{0, 1\}$ denotes the random variable that is equal to 1 when selected susceptible $k \in \{1,\ldots,\tilde{S}^N\}$ got infected, and 0 otherwise, then it is clear that $1-J_k \sim \text{Bernoulli}((1-p)^{\tilde{I}^N})$.
		
		\item Jump from $(S,I,R)$ to $(S-J,I+J,R)$, where $J = \sum_{k=1}^{\tilde{S}^N} J_k$ is the number of new infections.
		
	\end{itemize}
\end{itemize}

The process starts at $t=0$ from some given random vector $X_0^N = (S_0^N, I_0^N, R_0^N)$ satisfying $S_0^N + I_0^N + R_0^N = N$. This description unambiguously specifies the evolution of the process, which clearly satisfies $S_t^N + I_t^N + R_t^N = N$ for all $t\geq 0$; see also Section \ref{sec:LLN_general} for an explicit stochastic equation in terms of Poisson random processes. In Section \ref{sec:LLN_SIR_model_gatherings} we study the mean-field limit of $X_t^N$, i.e., we show that the proportions $S_t^N/N$, $I_t^N/N$, and $R_t^N/N$ converge, as $N\to\infty$, to the solution $(s_t,r_t,i_t)$ of \eqref{eq:extended_SIR_random}, just as in the classic setting \eqref{eq:classic_SIR}. This provides a completely rigorous mathematical justification for our model.

\subsection{Relation to previous works}

The present article extends the classic SIR model \eqref{eq:classic_SIR} of Kermack and McKendrick \cite{kermack-mckendrick1927} by introducing a non-linear dependence of the force of infection on $i$. This is not new: for instance, in \cite{liu-hethcote-levin1987,liu-levin-iwasa1986} the authors consider incidence rates of the kind $s^a i^b / (1+ci^{b-1})$ for some constants $a,b,c>0$, and study the dynamical behaviour of the associated system of ODEs. Similarly, in \cite{capasso-serio1978} the authors replace the term $\beta i$ in \eqref{eq:classic_SIR} by a general non-linear function satisfying some properties. One of the motivations of the authors was to capture the phenomenon of \emph{saturation} (see also \cite{kolokolnikov-iron2021}): as the number of infected individuals increases, the force of infection slows down, no longer increases linearly. Since this is certainly the case for \eqref{eq:extended_SIR_random} (unless $\Theta \in \{0,1,2\}$ a.s., see Proposition \ref{prop:properties_Bi}), the present paper can be seen as introducing a broad class of such non-linear functions $\mu B(i)$, by means of \eqref{eq:Bi}. Moreover, our model provides an intuitive explanation for the saturation phenomenon: social gatherings induce some redundancy in the infection mechanism. In other words: when the population has a significant number of infected individuals, some of their infectious power is lost because many gatherings will have few susceptibles available.

As mentioned in Section \ref{sec:random_size}, specific choices for the distribution of $\Theta$ yield formulas for the incidence rate already studied in the literature, of the form $s(1 - e^{-a i})$ \cite{kolokolnikov-iron2021} and $\frac{a s i}{1+bi}$ \cite{capasso-serio1978}. This shows that our proposed model is flexible and mathematically relevant, and at the same time it provides a probabilistic foundation for the use of those specific incidence rates.

Regarding the mean-field limit, it has been established for a model on $\RR^d$ much more general than \eqref{eq:classic_SIR}, see for instance \cite{andersson-britton2000,ethier-kurtz,kurtz1970}. In Section \ref{sec:LLN_general} we also state and prove a mean-field result ina a similar general setting, suitable to our purposes, see Theorem \ref{thm:LLN_general}. However, we remark that in the context of those references, the finite-population dynamics depends only on the proportions of each compartment, whereas in our proposed model \eqref{eq:extended_SIR_random} one needs to know the actual number of individuals in each compartment in order to sample the attendants of the gatherings. Consequently, the setting that we consider in Section \ref{sec:LLN_general} is slightly more general; specifically, the jump-rate functions will be allowed to depend on $N$. In Section \ref{sec:LLN_SIR_model_gatherings} we apply this to prove Theorem \ref{thm:LLN_extended_SIR}, which establishes the mean-field limit for \eqref{eq:extended_SIR_random}.

%

\section{Model derivation}
\label{sec:heuristic_justification}

We now proceed to deduce \eqref{eq:extended_SIR} and \eqref{eq:extended_SIR_random}. The population size is assumed to be infinite, which means that the developments of this section are still informal. We work with a fixed non-random size of gatherings $\theta \in \{0,1,2,\ldots\}$; the general case \eqref{eq:extended_SIR_random} follows just by taking expectations with respect to the randomness of $\Theta$.

Call $U$ the number of new infections in a gathering with $\theta$ individuals chosen randomly among the infinite population, where the proportion of susceptible, infected, and recovered individuals are $s$, $i$, and $r$, respectively. Since the average rate at which individuals change from susceptible to infected corresponds to $\mu \EE[U]$, to justify \eqref{eq:extended_SIR} we need to show that
\begin{equation}
\label{eq:EEU}
\EE[U] = \theta s \left( 1 - (1-ip)^{\theta-1} \right).
\end{equation}

Let $\tilde{S}$, $\tilde{I}$, and $\tilde{R}$ be the number of susceptible, infected, and recovered individuals, respectively, obtained after sampling $\theta$ individuals at random from the infinite population. Clearly, $(\tilde{S},\tilde{I},\tilde{R})$ has a multinomial distribution with probabilities $(s,i,r)$. Recall that each of the $\tilde{S}$ susceptibles will attempt to acquire the disease $\tilde{I}$ times, each time with probability $p$. Thus, given the value of $\tilde{I}$, the probability that a given susceptible in the room acquires the disease is $1-q^{\tilde{I}}$, where $q=1-p$. Therefore,
\begin{align}
\notag
\EE[U \mid \tilde{S},\tilde{I}]
&= \EE\left[ \sum_{k=1}^{\tilde{S}} \ind\{\text{susceptible $k$ got infected}\} ~\middle|~ \tilde{S},\tilde{I} \right] \\
\notag
&= \sum_{k=1}^{\tilde{S}} (1-q^{\tilde{I}}) \\
&= \tilde{S} - \tilde{S}q^{\tilde{I}}.
\label{eq:EEUmidW}
\end{align}
It is clear that $\tilde{S} \sim \text{binomial}(\theta,s)$ and $\tilde{I} \sim \text{binomial}(\theta,i)$. Moreover, given the value of $\tilde{I}$, we have $\tilde{S} \sim \text{binomial}(\theta - \tilde{I}, \frac{s}{s+r})$, which implies $\EE[\tilde{S} \mid \tilde{I}] = \frac{s}{s+r}(\theta-\tilde{I})$. Consequently, using \eqref{eq:Exi^Theta}, we have
\begin{align*}
\EE\left[ \EE[\tilde{S}q^{\tilde{I}} \mid \tilde{I} ] \right]
&= \frac{s}{s+r} \EE[ q^{\tilde{I}} (\theta - \tilde{I})] \\
&= \frac{s}{s+r} \left( \theta(1-i p)^\theta - \theta q i (1-ip)^{\theta-1} \right) \\
&= \theta s (1-ip)^{\theta-1}.
\end{align*}
Taking expectations in \eqref{eq:EEUmidW}, we thus obtain the desired expression \eqref{eq:EEU}:
\[
\EE[U]
= \theta s - \theta s (1-ip)^{\theta-1}
= \theta s \left( 1 - (1-ip)^{\theta-1} \right).
\]

\begin{remark}
In contrast, when the total population size is $N<\infty$, the $\theta$ individuals sampled (without replacement) from the finite population follow a \emph{multivariate hypergeometric distribution}. When studying the mean-field limit, one of the key points is a careful analysis on the convergence of this distribution, as $N\to\infty$, to the multinomial (sampling with replacement). This is performed in Step 4 of the proof of Theorem \ref{thm:LLN_extended_SIR}.
\end{remark}

\section{Main properties}
\label{sec:properties}

In this section we study the analytical behaviour of our proposed model \eqref{eq:extended_SIR_random}. The following proposition summarizes the main properties of the function $B(i)$ that we will use throughout this article. The proof is straightforward, so we omit it. To avoid trivial situations, in all what follows we will assume that $\Theta$, the random variable on $\{0,1,2,\ldots\}$ giving the size of the gatherings, satisfies
\[
\PP(\Theta \geq 2) >0
\quad \text{and} \quad
\EE[\Theta^2] < \infty.
\]

\begin{proposition}
	\label{prop:properties_Bi}
	Fix $p\in(0,1]$. Then, the function $B: [0,1] \to \RR_+$ given by \eqref{eq:Bi} satisfies the following properties:
	\begin{enumerate}
		
		\item $B(0) = 0$ and $B'(0) = p \EE[\Theta(\Theta-1)] > 0$.
		
		\item $B$ is strictly increasing.
		
		\item \label{prop:properties_Bi-iii}
		$B$ is Lipschitz continuous and concave. Moreover, $B(i) = B'(0) i$ if and only if $\Theta \in \{0,1,2\}$ a.s.; otherwise $B$ is strictly concave.
	\end{enumerate}
\end{proposition}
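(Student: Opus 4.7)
All three items follow from differentiating under the expectation once, together with a pointwise convexity argument for the remaining piece. The standing hypothesis $\EE[\Theta^2]<\infty$ supplies the integrable dominating functions that this approach needs.

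\emph{Items (i) and (ii).} For (i), setting $i=0$ in \eqref{eq:Bi} gives $B(0)=0$ immediately. To compute $B'$, differentiate the integrand: $\frac{d}{di}\bigl[\Theta(1-(1-pi)^{\Theta-1})\bigr] = p\Theta(\Theta-1)(1-pi)^{\Theta-2}$, which is bounded in absolute value by $p\Theta^2$ on $[0,1]\times\Omega$. Dominated convergence yields
\[
B'(i) = p\,\EE\bigl[\Theta(\Theta-1)(1-pi)^{\Theta-2}\bigr],
\]
and evaluating at $i=0$ gives $B'(0) = p\,\EE[\Theta(\Theta-1)]$, finite and strictly positive by $\PP(\Theta\ge 2)>0$. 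For (ii), the same formula shows $B'(i)\ge 0$ on $[0,1]$, and the integrand is strictly positive on $\{\Theta\ge 2\}$ whenever $1-pi>0$, so $B'(u)>0$ for all $u\in[0,1)$; integrating gives $B(i_2)-B(i_1)=\int_{i_1}^{i_2} B'(u)\,du>0$ for every $i_1<i_2$ in $[0,1]$, hence $B$ is strictly increasing.

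\emph{Item (iii): Lipschitz and concavity.} The bound $B'(i)\le p\,\EE[\Theta(\Theta-1)]=B'(0)$ from the formula above gives $B$ Lipschitz with constant $B'(0)$. For concavity I would rewrite
\[
B(i) = \EE[\Theta] - \EE\bigl[\Theta(1-pi)^{\Theta-1}\bigr],
\]
which is finite since $\EE[\Theta]\le \EE[\Theta^2]^{1/2}<\infty$. For each realization, $i\mapsto \Theta(1-pi)^{\Theta-1}$ is affine when $\Theta\in\{0,1,2\}$, and strictly convex on $[0,1]$ when $\Theta\ge 3$ (it is the composition of the affine map $i\mapsto 1-pi$ with $x\mapsto \Theta x^{\Theta-1}$, strictly convex on $[0,1]$ since the exponent $\Theta-1\ge 2$). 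Convexity is preserved by taking expectations, and strict convexity survives as soon as the strictly convex summands have positive probability; so $i\mapsto \EE[\Theta(1-pi)^{\Theta-1}]$ is convex, and strictly convex iff $\PP(\Theta\ge 3)>0$. Consequently $B$ is concave, and strictly concave iff $\PP(\Theta\ge 3)>0$. In the complementary case $\Theta\in\{0,1,2\}$ a.s., $B$ is affine with $B(0)=0$, forcing $B(i)=B'(0)\,i$; the converse is immediate.

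\emph{Main obstacle.} The one subtlety is that a naive second-derivative computation would produce $\EE[\Theta(\Theta-1)(\Theta-2)(1-pi)^{\Theta-3}]$, which need not be finite under $\EE[\Theta^2]<\infty$. Replacing the second differentiation by the pointwise convexity argument above removes this issue, requiring only first-moment integrability, and also delivers the dichotomy for strict concavity cleanly.
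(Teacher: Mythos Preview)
Your proof is correct; the paper itself omits the proof as ``straightforward,'' so there is nothing to compare against directly, but your argument is a clean and complete fill-in. Your observation that the second-derivative route would require $\EE[\Theta^3]<\infty$, and that replacing it by a pointwise convexity argument on $i\mapsto\Theta(1-pi)^{\Theta-1}$ sidesteps this while still yielding the strict-concavity dichotomy, is a genuinely useful remark that goes slightly beyond what ``straightforward'' suggests.
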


We now state the main properties of the solution of the system \eqref{eq:extended_SIR_random}. Notice that, thanks to the previous proposition, our setting is very similar to the one in \cite{capasso-serio1978}. Thus, the proof of the next lemma is a straightforward adaptation of the arguments in \cite[Section 4]{capasso-serio1978}, so we omit it here.

\begin{lemma}
	\label{lem:properties_sir}
	There exists a unique continuously differentiable solution $(s_t,i_t,r_t)_{t\geq 0}$ to \eqref{eq:extended_SIR_random}. Moreover, it satisfies the following properties:
	\begin{enumerate}
		\item $(s_t, i_t, r_t) \in [0,\infty)^3$ for all $t\geq 0$. If $s_0>0$, $i_0>0$, then $(s_t, i_t, r_t) \in (0,\infty)^3$ for all $t > 0$.
		
		\item $s_t + i_t + r_t = 1$ for all $t\geq 0$.
		
		\item $s_t$ is decreasing and $r_t$ is increasing. Moreover: if $s_0 \leq \gamma i_0 / (\mu B(i_0))$, then $i_t$ is decreasing; otherwise, $i_t$ first increases up to a maximum value, and then decreases.
		
		\item There exists $s_\infty \in [0,1]$ such that $\lim_{t\to\infty} (s_t,i_t,r_t) = (s_\infty, 0 ,1-s_\infty)$. If $s_0 \in (0,1)$, then $s_\infty \in (0,1)$ as well.
	\end{enumerate}
\end{lemma}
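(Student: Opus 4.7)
The plan is to deduce Lemma \ref{lem:properties_sir} from classical ODE arguments, leveraging the Lipschitz continuity and concavity of $B$ from Proposition \ref{prop:properties_Bi}. First, I would establish local existence and uniqueness via Picard-Lindelöf: since $B$ is Lipschitz on $[0,1]$, the right-hand side of \eqref{eq:extended_SIR_random} is Lipschitz in $(s,i,r)$ on $[0,1]^3$, yielding a unique local $C^1$ solution. Adding the three equations gives $(s_t+i_t+r_t)'=0$, which proves (ii). For non-negativity I would use the explicit representation
\[
s_t = s_0 \exp\Bigl(-\mu \int_0^t B(i_u)\,du\Bigr) \geq 0,
\]
strictly positive if $s_0>0$, together with the bound $i'\geq -\gamma i$ (since $B\geq 0$), which gives $i_t\geq i_0 e^{-\gamma t}$, and $r'=\gamma i\geq 0$; this proves (i). Since $s+i+r=1$, the trajectory is confined to $[0,1]^3$, ruling out blow-up and giving global existence.

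The monotonicity of $s$ and $r$ in (iii) is immediate from the signs of their derivatives. For the dichotomy on $i$, the key observation is that at any time $t_0$ with $i'(t_0)=0$,
\[
i''(t_0) = \mu s'(t_0) B(i(t_0)) + \bigl(\mu s(t_0) B'(i(t_0)) - \gamma\bigr) i'(t_0) = \mu s'(t_0) B(i(t_0)) \leq 0,
\]
with strict inequality whenever $s(t_0), i(t_0)>0$. Therefore $i'$ cannot cross from a non-positive to a positive value: if $s_0 \leq \gamma i_0/(\mu B(i_0))$, equivalently $i'_0 \leq 0$, then $i$ is decreasing thereafter; otherwise $i'_0>0$, $i$ initially increases, and once $i'$ hits zero it remains non-positive, giving the single-maximum structure.

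For (iv), monotonicity and boundedness give the existence of $s_\infty := \lim_{t\to\infty} s_t \in [0,1]$ and $r_\infty := \lim_{t\to\infty} r_t \in [0,1]$, hence of $i_\infty := 1 - s_\infty - r_\infty$. Integrating $r'=\gamma i$ over $[0,\infty)$ yields $\int_0^\infty i_u\,du = (r_\infty - r_0)/\gamma < \infty$, which forces $i_\infty = 0$. To show $s_\infty \in (0,1)$ when $s_0 \in (0,1)$, I would use the Lipschitz bound $B(i) \leq L i$ (since $B(0)=0$) to get $\int_0^\infty B(i_u)\,du \leq L/\gamma < \infty$, hence $s_\infty = s_0 \exp\bigl(-\mu \int_0^\infty B(i_u)\,du\bigr) > 0$, while $s_\infty \leq s_0 < 1$ follows from monotonicity.

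The main subtle point is the $i$-dichotomy in (iii), which depends essentially on the second-derivative argument above to exclude oscillatory rebounds of $i$; everything else reduces to elementary comparison and integration arguments that closely follow the pattern of \cite[Section 4]{capasso-serio1978}.
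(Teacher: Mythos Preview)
Your sketch is correct and follows exactly the route the paper intends: the paper omits the proof entirely and refers to \cite[Section 4]{capasso-serio1978}, which is precisely the template you reproduce. The only point left implicit is that, in the case $i'_0>0$, the derivative $i'$ must actually vanish at some finite time (otherwise $i\to i_\infty>0$ and $s'\to -\mu s_\infty B(i_\infty)<0$, contradicting $s\geq 0$), but this is routine.
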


The next lemma tells us that the proportion of susceptibles in our proposed model \eqref{eq:extended_SIR_random} is always bounded below by the corresponding proportion for the classic SIR model \eqref{eq:classic_SIR}, provided that $\beta = \mu B'(0)$ and that both have the same initial conditions. In other words, assuming the same basic reproduction number, the disease spreads more slowly in our setting; in Figure \ref{fig:gSIR_classicSIR} we illustrate this fact numerically. This was already hinted in \cite{capasso-serio1978}; for convenience of the reader, here we provide a precise statement and proof.

\begin{lemma}
	\label{lem:comparison}
	Let $\beta = \mu B'(0)$. Denote $(s_t,i_t,r_t)$ and $(\hat{s}_t, \hat{\imath}_t, \hat{r}_t)$ the solutions to \eqref{eq:extended_SIR_random} and \eqref{eq:classic_SIR} respectively, with the same initial conditions $(s_0,i_0,r_0)$, with $s_0,i_0>0$. Write  $i = i(s)$ and $\hat{\imath} = \hat{\imath}(s)$ the proportions of infected individuals in terms of the variable $s \in (0,s_0]$ of the proportion of susceptibles. Then,
	\begin{align*}
	i(s) &\leq \hat{\imath}(s) \quad \text{for all $s\in(0,s_0]$, and} \\
	s_t &\geq \hat{s}_t,
	\quad \text{for all $t\geq 0$.}
	\end{align*}
\end{lemma}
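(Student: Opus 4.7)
The plan is to establish the phase-plane inequality $i(s) \leq \hat{\imath}(s)$ first, by comparing $di/ds$ with $d\hat{\imath}/ds$ and using concavity of $B$ to obtain a one-sided bound, then integrating backward from the common initial point $(s_0, i_0)$. From there I would deduce $s_t \geq \hat{s}_t$ by parametrizing time as a function of $s$ and comparing the two resulting integrals.

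For the first step I would use Lemma \ref{lem:properties_sir} to ensure that $s_\cdot$ and $\hat{s}_\cdot$ are strictly decreasing $C^1$ bijections from $[0,\infty)$ onto intervals $(s_\infty, s_0]$ and $(\hat{s}_\infty, s_0]$ respectively, and that $i_t, \hat{\imath}_t > 0$ for all $t \geq 0$, so that $i$ and $\hat{\imath}$ are well-defined as functions of $s$ on those intervals. Dividing the first two lines of \eqref{eq:extended_SIR_random} (resp.\ \eqref{eq:classic_SIR}) gives
\[
\frac{di}{ds} = -1 + \frac{\gamma i}{\mu s B(i)}, \qquad \frac{d\hat{\imath}}{ds} = -1 + \frac{\gamma}{\beta s}.
\]
By Proposition \ref{prop:properties_Bi}(iii), $B$ is concave with $B(0)=0$, so $B(i)/i \leq B'(0) = \beta/\mu$ for every $i > 0$; hence $\gamma i/(\mu s B(i)) \geq \gamma/(\beta s)$, i.e.\ $di/ds \geq d\hat{\imath}/ds$ on the common domain. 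Integrating from $s$ to $s_0$ with the shared boundary value $i_0$ then yields $i(s) \leq \hat{\imath}(s)$. A short side-argument would give $s_\infty \geq \hat{s}_\infty$: otherwise, letting $s \downarrow \hat{s}_\infty$ in the comparison would force $0 < i(\hat{s}_\infty) \leq \hat{\imath}(\hat{s}_\infty) = 0$, a contradiction.

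For the second step I would exploit the strict monotonicity of $s_\cdot$ to write its inverse explicitly as
\[
t(s) = \int_s^{s_0} \frac{d\sigma}{\mu \sigma B(i(\sigma))}, \qquad s \in (s_\infty, s_0],
\]
and analogously $\hat{t}(s) = \int_s^{s_0} d\sigma/(\beta \sigma \hat{\imath}(\sigma))$. Combining concavity with the phase-plane inequality yields $\mu B(i(\sigma)) \leq \mu B'(0) i(\sigma) = \beta i(\sigma) \leq \beta \hat{\imath}(\sigma)$, so the integrand defining $t(s)$ pointwise dominates that defining $\hat{t}(s)$ on $(s_\infty, s_0]$. Hence $t(s) \geq \hat{t}(s)$ there, and inverting these strictly decreasing bijections finishes the proof: for any $u \geq 0$, setting $s = s_u > s_\infty \geq \hat{s}_\infty$ one gets $\hat{t}(s_u) \leq t(s_u) = u = \hat{t}(\hat{s}_u)$, whence $\hat{s}_u \leq s_u$ by monotonicity of $\hat{t}$.

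The main thing to be careful with is the book-keeping around the two terminal limits $s_\infty$ and $\hat{s}_\infty$ and the justification that reparametrization by $s$ is legitimate on the relevant interval; the derivative comparison itself is essentially a one-line consequence of concavity, and no Gronwall-type estimate or fixed-point argument should be needed.
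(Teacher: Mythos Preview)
Your argument is correct. The phase-plane step is essentially identical to the paper's: both derive $di/ds$ and $d\hat\imath/ds$, use concavity of $B$ to get $B(i)/i \le B'(0)$, and integrate backward from the common initial point $(s_0,i_0)$.

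The second step, however, takes a genuinely different route. The paper argues by contradiction in the time variable: assuming a first crossing time $t_*$ with $s_{t_*}=\hat s_{t_*}$ after which $s$ drops below $\hat s$, it compares the derivatives $s'_{t_*}$ and $\hat s'_{t_*}$ directly using $B(i_{t_*}) < B'(0) i_{t_*}$ and $i_{t_*}\le \hat\imath_{t_*}$ (the latter coming from the phase-plane inequality at the common $s$-value $s_{t_*}=\hat s_{t_*}$), obtaining $s'_{t_*} > \hat s'_{t_*}$, which contradicts the definition of $t_*$. Your approach instead writes the inverse functions $t(s)$ and $\hat t(s)$ as explicit integrals and compares the integrands pointwise, chaining $\mu B(i(\sigma)) \le \beta i(\sigma) \le \beta \hat\imath(\sigma)$. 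This is more constructive and makes the monotonicity transparent, at the price of the small side-argument that $s_\infty \ge \hat s_\infty$ (needed so that $\hat t(s_u)$ is defined for every $u\ge 0$). The paper's route is a line shorter and sidesteps that bookkeeping, but has to handle the borderline case $t_*=0$ (where $i_{t_*}=\hat\imath_{t_*}$) by relying on the strict inequality from strict concavity. Both arguments are clean; yours has the mild advantage of not needing the strict-concavity case split.
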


\begin{proof}
	By Lemma \ref{lem:properties_sir}, we have $s_t,i_t,r_t>0$ for all $t>0$. We assume that $B$ is strictly concave; if not, then Proposition \ref{prop:properties_Bi}-\ref{prop:properties_Bi-iii} gives $B(i) = B'(0) i$, which implies that $i(s) \equiv \hat{\imath}(s)$ and $s_t \equiv \hat{s}_t$. From \eqref{eq:extended_SIR_random} and \eqref{eq:classic_SIR}, we obtain
	\[
	\frac{di}{ds} = -1 + \frac{\gamma i}{\mu s B(i)},
	\qquad \text{and} \qquad
	\frac{d\hat{\imath}}{ds} = -1 + \frac{\gamma}{\mu s B'(0)}.
	\]
	Consequently, for $u = u(s) = i(s) - \hat{\imath}(s)$, since $B(i) < B'(0)i$ for $i>0$, we have
	\[
	\frac{du}{ds} = \frac{\gamma}{\mu s} \left( \frac{i}{B(i)} - \frac{1}{B'(0)} \right)
	> 0.
	\]
	Since $u(s_0) = 0$, this gives $u(s) < 0$, that is, $i(s) < \hat{\imath}(s)$ for all $s \in (0,s_0)$.
	
	Now, let's go back to time variable. We argue by contradiction: assume that there exists $\bar{t}$ such that $s_{\bar{t}} < \hat{s}_{\bar{t}}$. Let $t_* \in [0, \bar{t})$ be the last time that $s_t$ and $\hat{s}_t$ were equal. We thus have
	\[
	s_{t_*}'
	= - \mu s_{t_*} B(i_{t_*})
	> - \mu \hat{s}_{t_*} B'(0) i_{t_*}
	> - \mu \hat{s}_{t_*} B'(0) \hat{\imath}_{t_*}
	= s_{t_*}',
	\]
	which contradicts the definition of $t_*$. Thus, we must have that $s_t \geq \hat{s}_t$ for all $t \geq 0$.
\end{proof}

\begin{figure}[t!]
	\includegraphics[width=\textwidth]{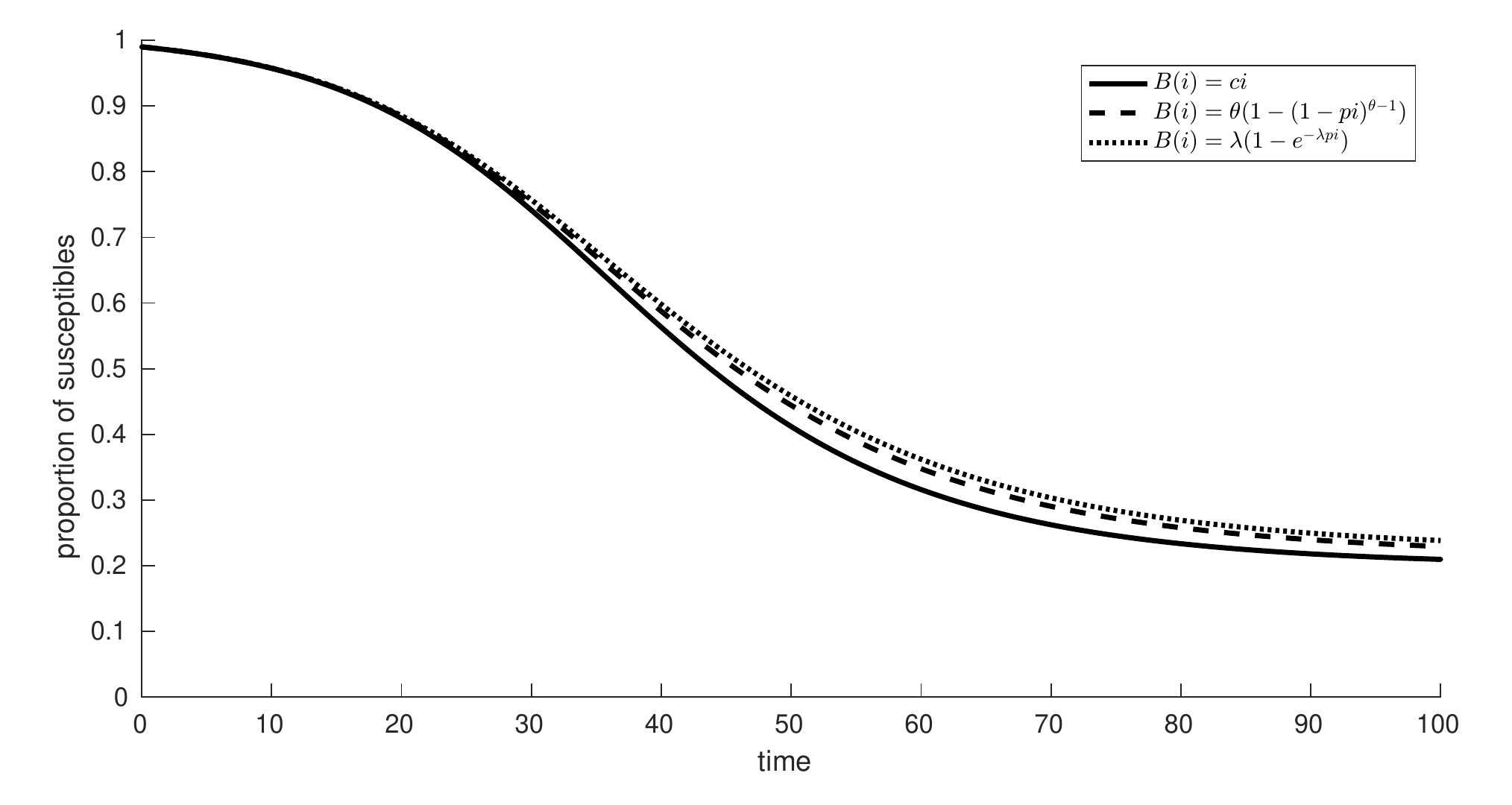}
	\caption{Evolution of the proportion of susceptible individuals $s$, obtained by solving numerically the general model \eqref{eq:extended_SIR_random} for three choices of the function $B(i)$. The parameters $c$, $\theta$ and $\lambda$ were chosen such that $B'(0)$ remains the same; specifically, $c = p\theta(\theta-1)$ and $\lambda^2 = \theta(\theta-1)$, for $\theta = 5$. Other parameters: $\mu = .05$, $p=.2$, $\gamma=.1$, $i_0 = .01$. Note that the curve corresponding to the classic SIR model \eqref{eq:classic_SIR}, i.e., $B(i) = ci$, remains below the other two curves.}
	\label{fig:gSIR_classicSIR}
\end{figure}

\section{Mean-field limit for a general model}
\label{sec:LLN_general}

Our goal now is to study the mean-field limit of the jump Markov process corresponding to the evolution of the disease in a finite population of size $N$ for our SIR model with gatherings \eqref{eq:extended_SIR_random}, as described in Section \ref{sec:SIR_N}. To that end, in this section we will first prove the desired convergence in a much more general setting; then, in Section \ref{sec:LLN_SIR_model_gatherings}, we will apply this result to \eqref{eq:extended_SIR_random}. We follow and generalize slightly the developments of \cite[Chapter 5]{andersson-britton2000}, see also \cite[Chapter 11]{ethier-kurtz}.

Let us describe the general model. Fix the total number of individuals $N \in \NN$. Let $(X_t^N)_{t\geq 0}$ be the jump Markov process on $\{0,\ldots,N\}^d$, starting from some given random initial condition $X_0^N \in \{0,\ldots,N\}^d$, such that for all states $x \in \{0,\ldots,N\}^d$ and jump amplitudes $\ell \in \ZZ^d$,
\begin{equation}
\label{eq:jump_rule}
\text{a jump from $x$ to $x+\ell$ occurs at rate $N \lambda_\ell^N(x)$.}
\end{equation}
Here, $\lambda_\ell^N : \{0,\ldots,N\}^d \to \RR_+$ is a rate function such that $\lambda_\ell^N(x) = 0$ whenever $x+\ell \notin \{0,\ldots,N\}^d$; this ensures that the process remains in $\{1,\ldots,N\}^d$.

\begin{example}
\label{ex:SIR_N}
For our SIR model with gatherings, we have $d=3$, and the rate functions $\lambda_\ell^N(\cdot)$ can be non-zero only for two types of jump amplitudes $\ell \in \ZZ^3$:
\begin{align*}
\ell_0 &:= (0,-1,1) & \text{(recovery)}, \\
\ell_k &:= (-k,k,0), \quad k=1,\ldots,N & \text{(infection of $k$ susceptibles)}.
\end{align*}
Specifically, for any $(S,I,R) \in \{0,\ldots,N\}^3$ with $S+I+R = N$,
\begin{align*}
\lambda_{\ell_0}^N(S,I,R)
&= \gamma \frac{I}{N}, \\
\lambda_{\ell_k}^N(S,I,R)
&= \mu \PP[U^N = k],
\end{align*}
where $U^N \in \{0,\ldots,S\}$ is the random variable of the number of newly infected individuals in a gathering of size $\Theta$ sampled randomly without replacement from the finite population $(S,I,R)$. Notice that, whereas one can write $\lambda_{\ell_0}^N(S,I,R) = \gamma i$ (dependence only on $i=I/N$), the functions $\lambda_{\ell_k}^N(S,I,R)$ for $k=1,\ldots,N$ do depend on $N$ through the distribution of $U^N$. This shows why the general model that we just introduced allows the rate functions to depend on $N$.
\end{example}

It is convenient to write the process $X_t^N$ explicitly. To that end, consider a collection $(\P_\ell(t))_{\ell \in \ZZ^d}$ of independent Poisson processes on $\RR$ with intensity 1. It is straightforward to see that the stochastic equation
\begin{equation}
\label{eq:XtN}
X_t^N = X_0^N + \sum_{\ell \in \ZZ^d} \ell \P_\ell\left(N  \int_0^t \lambda_\ell^N(X_s^N) ds \right)
\end{equation}
indeed defines a process with jump intensities given by \eqref{eq:jump_rule}.

In Theorem \ref{thm:LLN_general} below we present a convergence result for the process $Z_t^N = \frac{1}{N} X_t^N$, which is a more general version of \cite[Theorem 5.2]{andersson-britton2000}. In that reference, the proof was based on the following well known fact: for a Poisson process $\P(t)$ on $\RR$ with intensity $\lambda$, one has for all $T\geq 0$:
\begin{equation}
\label{eq:Poisson_LLN_1D}
\lim_{N\to\infty} \sup_{t \leq T} \left| \frac{1}{N}\P(Nt) - \lambda t \right| = 0
\quad \text{a.s.}
\end{equation}
In the present paper, because our setting is more general (in particular, the jump rate functions $\lambda_\ell^N(x)$ are not required to be identically 0 for all but finitely many $\ell$'s, uniformly on $N$), we will need the following stronger version. The proof is somewhat technical, so it may be skipped at first reading. In what follows, $|\cdot|$ denotes the 1-norm on $\RR^d$.

\begin{lemma}
\label{lem:Q_LLN}
Consider a collection of non-negative numbers $(\bar{\lambda}_\ell)_{\ell \in \ZZ^d}$ such that $\sum_{\ell} |\ell| \bar{\lambda}_\ell < \infty$, and define the process on $\ZZ^d$
\[
\Q(t) = \sum_{\ell \in \ZZ^d} \ell \P_\ell( \bar{\lambda}_\ell t).
\]
Then $\Q$ is well defined. Moreover, for $\bar{\lambda} = \sum_{\ell} \ell \bar{\lambda}_\ell \in \RR^d$, it satisfies for all $T\geq 0$:
\[
\lim_{N\to\infty} \sup_{t \leq T} \left| \frac{1}{N}\Q(Nt) - t \bar{\lambda} \right| = 0
\quad \text{a.s.}
\]	
\end{lemma}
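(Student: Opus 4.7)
The plan is to first dispatch well-definedness and then reduce the uniform LLN to two classical facts: the scalar Poisson LLN \eqref{eq:Poisson_LLN_1D} and the SLLN for i.i.d.\ sums. For well-definedness I would fix $T>0$ and observe that
\[
\sum_{\ell} \EE\bigl[|\ell|\,\P_\ell(\bar\lambda_\ell T)\bigr] = T\sum_{\ell}|\ell|\bar\lambda_\ell < \infty,
\]
so $\sum_\ell |\ell|\,\P_\ell(\bar\lambda_\ell T)<\infty$ almost surely by monotone convergence. Since each $t \mapsto \P_\ell(\bar\lambda_\ell t)$ is nondecreasing, this estimate holds uniformly on $[0,T]$, so on a full-probability event the series defining $\Q(t)$ converges absolutely for every $t\in[0,T]$.

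The central reduction is a compound-Poisson representation. Since $|\ell|\geq 1$ for every nonzero $\ell\in\ZZ^d$, one has $\bar\Lambda := \sum_{\ell\neq 0}\bar\lambda_\ell \leq \sum_\ell|\ell|\bar\lambda_\ell < \infty$, and the summand for $\ell=0$ contributes nothing to $\Q$. By the superposition theorem for independent Poisson processes, $(\Q(t))_{t\geq 0}$ has the same law as $\bigl(\sum_{i=1}^{\Pi(\bar\Lambda t)} Y_i\bigr)_{t\geq 0}$, where $\Pi$ is a unit-intensity Poisson process independent of i.i.d.\ $\ZZ^d$-valued marks $(Y_i)$ with $\PP(Y_1=\ell) = \bar\lambda_\ell/\bar\Lambda$. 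A direct computation gives $\EE|Y_1| = \bar\Lambda^{-1}\sum_\ell|\ell|\bar\lambda_\ell < \infty$ and $\EE Y_1 = \bar\lambda/\bar\Lambda$. Since the claim depends only on the law of $\Q$, I would work with this representation henceforth.

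Writing $\tilde S_n := \sum_{i=1}^n (Y_i - \EE Y_1)$ and $K_N(t) := \Pi(\bar\Lambda Nt)$, the decomposition
\[
\frac{1}{N}\Q(Nt) - t\bar\lambda = \frac{\tilde S_{K_N(t)}}{N} + \EE[Y_1]\left(\frac{K_N(t)}{N} - t\bar\Lambda\right)
\]
splits the error into two pieces. The second vanishes uniformly on $[0,T]$ by \eqref{eq:Poisson_LLN_1D} applied to $\Pi$ on the interval $[0,\bar\Lambda T]$. For the first, monotonicity of $K_N$ yields
\[
\sup_{t\leq T}\bigl|\tilde S_{K_N(t)}\bigr| \leq \max_{0\leq n\leq K_N(T)} |\tilde S_n|,
\]
while the classical SLLN gives $\tilde S_n/n \to 0$ a.s. A standard $\varepsilon$--$n_0$ argument upgrades this to $M^{-1}\max_{n\leq M}|\tilde S_n|\to 0$ a.s.\ as $M\to\infty$; applying it with the random index $M = K_N(T)$, which tends to $\infty$ and satisfies $K_N(T)/N\to \bar\Lambda T$ a.s.\ by \eqref{eq:Poisson_LLN_1D}, gives
\[
\frac{1}{N}\max_{n\leq K_N(T)}|\tilde S_n| \;=\; \frac{K_N(T)}{N}\cdot\frac{\max_{n\leq K_N(T)}|\tilde S_n|}{K_N(T)} \;\longrightarrow\; 0 \quad \text{a.s.},
\]
which finishes the argument.

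The main obstacle I anticipate is controlling the infinite sum over $\ell$ while retaining almost-sure (not merely $L^1$) convergence: a naive split into a finite set $F$ and its complement forces a bound on $\limsup_N N^{-1}\sum_{\ell\notin F}|\ell|\,\P_\ell(\bar\lambda_\ell NT)$ that Markov's inequality alone cannot deliver, and a direct martingale approach via Doob's $L^2$ inequality applied to $\Q(t)-t\bar\lambda$ would require $\sum_\ell|\ell|^2\bar\lambda_\ell<\infty$, which is not assumed. Routing the argument through the compound-Poisson representation sidesteps both issues by collapsing the infinite collection of Poisson processes into a single clock $\Pi$ and a single i.i.d.\ sequence $(Y_i)$, at which point the classical SLLN handles the tail automatically.
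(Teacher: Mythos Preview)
Your proof is correct and follows a genuinely different route from the paper. The paper works directly with the infinite sum $\sum_\ell |\ell|\sup_{t\leq T}\bigl|\frac{1}{N}\P_\ell(\bar\lambda_\ell Nt)-\bar\lambda_\ell t\bigr|$ and must justify exchanging $\lim_N$ with $\sum_\ell$; for this it invokes a ``converse'' of the dominated convergence theorem (due to Rennie), arguing by contradiction that a subsequence of the dominating functions $f_N(\ell)=|\ell|\frac{1}{N}\P_\ell(\bar\lambda_\ell NT)+|\ell|\bar\lambda_\ell T$ admits an integrable majorant, so DCT closes the argument. Your approach bypasses this exchange entirely: exploiting that $|\ell|\geq 1$ for nonzero $\ell\in\ZZ^d$ forces $\bar\Lambda<\infty$, you collapse the infinite family of Poisson clocks into a single compound Poisson process $\sum_{i\leq\Pi(\bar\Lambda t)}Y_i$ and reduce to the scalar Poisson LLN plus the ordinary SLLN for i.i.d.\ marks with $\EE|Y_1|<\infty$. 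This is more elementary (no Rennie-type result needed) and arguably more transparent, at the modest cost of appealing to the superposition/thinning representation of compound Poisson processes. The paper's approach, on the other hand, stays closer to the original definition of $\Q$ and would adapt more readily to settings where the marks do not lie in a lattice (so that $\sum_\ell\bar\lambda_\ell<\infty$ need not follow from $\sum_\ell|\ell|\bar\lambda_\ell<\infty$). Your final paragraph correctly identifies why the naive finite-truncation and $L^2$/Doob approaches fail under the stated hypothesis, which is precisely the obstacle the paper's measure-theoretic detour is designed to overcome.
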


\begin{proof}
For simplicity, we work with $d=1$; the general case is obtained arguing component-wise.

Note that $| \EE[\Q(t)] | \leq \sum_\ell |\ell| \bar{\lambda}_\ell t < \infty$, thus $\Q$ is well defined. To prove the desired convergence, the main idea of the argument, which can be found in \cite[Chapter 11, Theorem 2.1]{ethier-kurtz}, is the following:
\begin{align*}
\lim_{N\to\infty} \sup_{t \leq T} \left| \frac{1}{N}\Q(Nt) - t \bar{\lambda} \right|
&\leq \lim_{N\to\infty} \sum_{\ell \in \ZZ} |\ell| \sup_{t \leq T} \left| \frac{1}{N} \P_\ell(\bar{\lambda}_\ell N t) - \bar{\lambda}_\ell t \right| \\
&= \sum_{\ell \in \ZZ} |\ell| \lim_{N\to\infty}  \sup_{t \leq T} \left| \frac{1}{N} \P_\ell(\bar{\lambda}_\ell N t) - \bar{\lambda}_\ell t \right|,
\end{align*}
which equals 0 thanks to \eqref{eq:Poisson_LLN_1D}. However, it is not obvious that one can exchange the limit and the summation; we will spend the rest of the proof justifying this step. We will use the following ``converse'' of the dominated convergence theorem, which can be found in \cite{rennie1960}:

\begin{proposition}
\label{prop:DCT_converse}
Let $f_N$, $f$ be integrable functions on a $\sigma$-finte measure space $(E,\nu)$ such that $\lim_N f_N = f$ $\nu$-a.s., and $\lim_N \int f_N g d\nu = \int f g d\nu$
for all $g$ bounded and measurable. Then, any subsequence of $(f_N)_{N\in\NN}$ has a sub-subsequence which is dominated by an integrable function.
\end{proposition}

Now, let $h_N(\ell)
= |\ell| \sup_{t \leq T} | \frac{1}{N} \P_\ell(\bar{\lambda}_\ell N t) - \bar{\lambda}_\ell t|
$. We want to show that $\lim_N \sum_\ell h_N(\ell) = 0$. We argue by contradiction: assume that, modulo subsequence, we have $\lim_N \sum_\ell h_N(\ell) =: a > 0$. Note that
\begin{equation}
\label{eq:hNell}
h_N(\ell)
\leq |\ell| \frac{1}{N} \P_\ell(\bar{\lambda}_\ell N T) + |\ell| \bar{\lambda}_\ell T
=: f_N(\ell).
\end{equation}
We will apply Proposition \ref{prop:DCT_converse} in the measure space $E=\ZZ$ with $\nu$ being the counting measure. Fix $g : \ZZ \to \RR$ bounded. Then:
\begin{align*}
\int f_N g d\nu
&= \sum_{\ell \in \ZZ} |\ell| \frac{1}{N} \P_\ell(\bar{\lambda}_\ell N T) g(\ell)
+ T \sum_{\ell \in \ZZ} |\ell| \bar{\lambda}_\ell g(\ell) \\
&= \frac{1}{N} \sum_{k=1}^N Y_k
+ T \sum_{\ell \in \ZZ} |\ell| \bar{\lambda}_\ell g(\ell),
\end{align*}
where
\[
Y_k := \sum_{\ell \in \ZZ} |\ell| g(\ell) \left\{ \P_\ell(k \bar{\lambda}_\ell T) - \P_\ell((k-1)\bar{\lambda}_\ell T) \right\}.
\]
Since the $\P_\ell$'s have independent increments, we see that $(Y_k)_{k\in \NN}$ are i.i.d., with $\EE[Y_k] = T \sum_\ell |\ell| \bar{\lambda}_\ell g(\ell)  < \infty$. Thus, by the strong law of large numbers, we deduce that $\frac{1}{N} \sum_{k=1}^N Y_k$ converges to $T \sum_\ell |\ell| \bar{\lambda}_\ell g(\ell)$, $\PP$-a.s. Consequently, $\lim_N \int f_N g d\nu = \int f g d\nu$, for the $\nu$-integrable function $f(\ell) := 2T |\ell| \bar{\lambda}_\ell$. Similarly, for all $\ell$ we have $\lim_N f_N(\ell) = f(\ell)$, $\PP$-a.s.

We can thus apply Proposition \ref{prop:DCT_converse} and deduce that, modulo subsequence, $f_N$ is dominated by some $\nu$-integrable function. By \eqref{eq:hNell}, this implies that $h_N$ is also dominated. The dominated convergence theorem now gives
\[
0
< a
= \sum_{\ell \in \ZZ} \lim_{N \to \infty} h_N(\ell)
= 0,
\]
thanks to \eqref{eq:Poisson_LLN_1D}. This is a contradiction, which concludes the proof.
\end{proof}

In order to state and prove our general result, rather than dealing directly with $\lambda_\ell^N(x)$, which gives the rate for each jump amplitude $\ell \in \ZZ^d$, it is convenient to study the \emph{expected} jump rate amplitudes: define the function
\begin{equation}
\label{eq:FN}
F^N(z) = \sum_{\ell \in \ZZ^d} \ell \lambda_\ell^N(Nz),
\qquad
\forall z \in \left\{0,\frac{1}{N},\ldots,1 \right\}^d.
\end{equation}
That is, when the process is at state $Nz$, the vector $F^N(z)$ corresponds to the expected jump rate amplitudes. We will require that $F^N$ converges in some sense to a function $F : [0,1]^d \to \RR^d$, as $N\to\infty$. We will quantify this using the uniform norm:
\begin{equation}
\label{eq:infty_norm}
\Vert F^N - F \Vert_\infty
:= \sup_{z \in \{0,\frac{1}{N},\ldots,1 \}^d} |F^N(z) - F(z)|.
\end{equation}
Moreover, given some $z_0 \in [0,1]^d$, we will typically denote $(z_t)_{t\geq 0}$ the solution to the differential equation in integral form
\begin{equation}
\label{eq:zt}
z_t = z_0 + \int_0^t F(z_s) ds,
\qquad \forall t\geq 0.
\end{equation}

\begin{theorem}
	\label{thm:LLN_general}
	Let $(X_t)_{t\geq 0}$ be the jump Markov process on $\{0,\ldots,N\}^d$ given by the rule \eqref{eq:jump_rule}, and denote $Z_t^N = \frac{1}{N}X_t^N$. Assume that there exist a collection of non-negative numbers $(\bar{\lambda}_\ell)_{\ell \in \ZZ^d}$ and a function $F: [0,1]^d \to \RR^d$ such that:
	\begin{enumerate}
		\item \label{thm:LLN_general-i} $\lambda_\ell^N(x) \leq \bar{\lambda}_\ell$ for all $N\in \NN$ and $x \in \{0,\ldots,N\}^d$, and $\sum_\ell |\ell| \bar{\lambda}_\ell < \infty$,
		
		\item \label{thm:LLN_general-ii} $F$ is Lispchitz, and $\lim_N \Vert F^N - F \Vert_\infty =0 $,
	\end{enumerate}
	Assume also that $\lim_N Z_0^N = z_0$ a.s.\ for some $z_0 \in [0,1]^d$, and let $(z_t)_{t\geq 0}$ be the solution to \eqref{eq:zt} associated with $F$. Then, for all $T\geq 0$,
	\[
	\lim_{N\to\infty} \sup_{t \leq T} |Z_t^N - z_t| = 0
	\quad \text{a.s.}
	\]
\end{theorem}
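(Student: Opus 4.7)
The plan is to use the classical Kurtz-style martingale-plus-compensator decomposition for $Z_t^N$, turn the comparison with $z_t$ into a Gronwall estimate, and then show that the fluctuation term vanishes by invoking Lemma \ref{lem:Q_LLN}.

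First I would start from the stochastic equation \eqref{eq:XtN} and divide by $N$. Writing $A_\ell^N(t) := N\int_0^t \lambda_\ell^N(X_s^N)\,ds$ and adding and subtracting the compensator inside each Poisson term, together with Fubini to exchange sum and integral (legal since $\sum_\ell |\ell|\bar\lambda_\ell < \infty$ by hypothesis \ref{thm:LLN_general-i}) and the definition \eqref{eq:FN} of $F^N$, one arrives at
\[
Z_t^N = Z_0^N + \int_0^t F^N(Z_s^N)\,ds + M_t^N,
\qquad
M_t^N := \frac{1}{N}\sum_{\ell \in \ZZ^d}\ell\bigl[\P_\ell(A_\ell^N(t)) - A_\ell^N(t)\bigr].
\]
Subtracting the integral equation \eqref{eq:zt}, splitting $F^N(Z_s^N) - F(z_s) = [F^N(Z_s^N) - F(Z_s^N)] + [F(Z_s^N) - F(z_s)]$, bounding the first difference by $\Vert F^N - F\Vert_\infty$ and using the Lipschitz constant $L$ of $F$ for the second, one obtains
\[
|Z_t^N - z_t| \leq |Z_0^N - z_0| + T\Vert F^N - F\Vert_\infty + \sup_{s\leq T}|M_s^N| + L\int_0^t |Z_s^N - z_s|\,ds.
\]
Grönwall's inequality then yields
\[
\sup_{t\leq T}|Z_t^N - z_t| \leq \bigl(|Z_0^N - z_0| + T\Vert F^N - F\Vert_\infty + \sup_{t\leq T}|M_t^N|\bigr) e^{LT}.
\]

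The first two terms inside the parenthesis vanish almost surely by the hypotheses on $Z_0^N$ and on $F^N$, so the only substantive step, and the main obstacle, is to prove that $\sup_{t\leq T}|M_t^N| \to 0$ a.s. Using $\lambda_\ell^N \leq \bar\lambda_\ell$, the function $t\mapsto A_\ell^N(t)$ is non-decreasing and takes values in $[0, N\bar\lambda_\ell T]$ for $t\in[0,T]$. Applying the triangle inequality componentwise, swapping the (non-negative) supremum and sum, and enlarging the range of each Poisson argument, I obtain
\[
\sup_{t\leq T}|M_t^N| \leq \sum_{\ell}|\ell|\sup_{u\leq N\bar\lambda_\ell T}\frac{|\P_\ell(u) - u|}{N} = \sum_{\ell}|\ell|\sup_{s\leq T}\Bigl|\frac{\P_\ell(\bar\lambda_\ell Ns)}{N} - \bar\lambda_\ell s\Bigr|,
\]
where in the last step I reparametrize $u = N\bar\lambda_\ell s$. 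The right-hand side is exactly the quantity $\sum_\ell h_N(\ell)$ shown to converge almost surely to $0$ inside the proof of Lemma \ref{lem:Q_LLN} (via the converse to dominated convergence of Proposition \ref{prop:DCT_converse}), which concludes the argument.
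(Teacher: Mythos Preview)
Your argument is correct and matches the paper's proof essentially step for step: the same compensator decomposition of \eqref{eq:XtN}, the same add-and-subtract of $F(Z_s^N)$ followed by Gr\"onwall, and the same appeal to Lemma~\ref{lem:Q_LLN} for the fluctuation term. The only cosmetic difference is that you pull the absolute value inside the sum before enlarging the Poisson arguments and then cite the quantity $\sum_\ell h_N(\ell)$ from inside the proof of Lemma~\ref{lem:Q_LLN}, whereas the paper writes the bound with the sum still inside $|\cdot|$ and invokes the lemma's statement directly; your version is in fact the cleaner justification of that step.
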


\begin{proof}
The proof is an extension of \cite[Theorem 5.2]{andersson-britton2000}. Call $\hat{\P}_\ell(t) = \P_\ell(t) -t$. From \eqref{eq:XtN}, dividing by $N$, gives
\[
Z_t^N
= Z_0^N
+ \frac{1}{N} \sum_{\ell \in \ZZ^d} \ell \hat{\P}_\ell\left( N \int_0^t \lambda_\ell^N(N Z_s^N) ds \right)
+ \int_0^t F^N(Z_s^N) ds.
\]
From \eqref{eq:zt}, we have for all $t\leq T$:
\begin{align*}
& |Z_t^N - z_t| \\
& \leq |Z_0^N - z_0|
+ \sup_{s\leq t} \left| \frac{1}{N} \sum_{\ell \in \ZZ^d} \ell \hat{\P}_\ell\left( N \int_0^s \lambda_\ell^N(N Z_u^N) du \right) \right|
+ \int_0^t |F^N(Z_s^N) - F(z_s)| ds \\
& \leq |Z_0^N - z_0|
+ \sup_{s\leq t} \left| \frac{1}{N} \sum_{\ell \in \ZZ^d} \ell \hat{\P}_\ell( N \bar{\lambda}_\ell s) \right| + t \Vert F^N - F \Vert_\infty
+ L \int_0^t  |Z_s^N - z_s| ds,
\end{align*}
where in the second term we have used that $\lambda_\ell^N(\cdot) \leq \bar{\lambda}_\ell$, and in the third term we added and subtracted $F(Z_s^N)$ and used that $F$ is $L$-Lipschitz. Using Grönwall's lemma, we thus obtain
\[
|Z_t^N - z_t|
\leq \left( |Z_0^N - z_0|
+ \sup_{s\leq t} \left| \frac{1}{N} \sum_{\ell \in \ZZ^d} \ell \hat{\P}_\ell( N \bar{\lambda}_\ell s) \right|
+ t \Vert F^N - F \Vert_\infty \right) e^{L t}.
\]
Thus, $\sup_{t\leq T} |Z_t^N - z_t|$ is bounded by the right hand side with $t$ replaced by $T$. The first and third term converge to 0 as $N\to\infty$ by assumption, whereas the second term also converges to 0, thanks to Lemma \ref{lem:Q_LLN}. The result follows taking limits.
\end{proof}

\section{Mean-field limit for the SIR model with gatherings}
\label{sec:LLN_SIR_model_gatherings}

Finally, we use Theorem \ref{thm:LLN_general} to state and prove the following result, which establishes the mean-field limit of the finite population Markov process associated with our proposed model \eqref{eq:extended_SIR_random}.

\begin{theorem}
	\label{thm:LLN_extended_SIR}
	Let $X_t^N$ be the Markov process on $\{0,\ldots,N\}^3$ described in Section \ref{sec:SIR_N}, and also in Example \ref{ex:SIR_N}. Denote $Z_t^N = \frac{1}{N} X_t^N$, and let $z_t = (s_t,i_t,r_t)$ be the solution to \eqref{eq:extended_SIR_random}. Assume that $\lim_N Z_0^N = z_0$. Then, for all $T\geq 0$,
	\[
	\lim_{N\to \infty} \sup_{t\leq T} |Z_t^N - z_t|
	= 0
	\quad \text{a.s.}
	\]
\end{theorem}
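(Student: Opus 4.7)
The strategy is to apply Theorem \ref{thm:LLN_general} to the Markov process described in Example \ref{ex:SIR_N}, with limiting vector field
\[
F(s,i,r) = \bigl(-\mu s B(i),\ \mu s B(i) - \gamma i,\ \gamma i\bigr),
\]
so that \eqref{eq:zt} coincides with \eqref{eq:extended_SIR_random}. Since $B$ is Lipschitz by Proposition \ref{prop:properties_Bi} and $s,B(i)$ are bounded on $[0,1]$, the product $sB(i)$ is Lipschitz and hence $F$ is Lipschitz on $[0,1]^3$. The two remaining tasks are to produce dominating constants satisfying hypothesis \ref{thm:LLN_general-i}, and to establish the uniform convergence $\Vert F^N - F \Vert_\infty \to 0$ demanded by hypothesis \ref{thm:LLN_general-ii}.

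For the domination, I would set $\bar\lambda_{\ell_0} = \gamma$ and $\bar\lambda_{\ell_k} = \mu \PP[\Theta \geq k]$ for $k\geq 1$. The bound $\lambda_{\ell_0}^N(S,I,R) = \gamma I/N \leq \gamma$ is immediate; and since $\{U^N = k\} \subseteq \{\Theta \geq k\}$ for $k\geq 1$, we get $\lambda_{\ell_k}^N = \mu\PP[U^N=k] \leq \mu\PP[\Theta\geq k]$. A Fubini computation then yields
\[
\sum_\ell |\ell|\bar\lambda_\ell = 2\gamma + 2\mu \sum_{k\geq 1} k\,\PP[\Theta\geq k] = 2\gamma + \mu\bigl(\EE[\Theta^2] + \EE[\Theta]\bigr) < \infty,
\]
under the standing assumption $\EE[\Theta^2] < \infty$.

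The main work is the uniform convergence. Writing out both vector fields, $F^N - F$ vanishes in its third component and equals $\pm\mu(\EE[U^N] - sB(i))$ in the other two, so it suffices to show
\[
\sup_{\substack{S+I+R=N \\ (s,i,r) = (S,I,R)/N}}\bigl|\EE[U^N] - sB(i)\bigr| \xrightarrow[N\to\infty]{} 0.
\]
By the derivation in Section \ref{sec:heuristic_justification}, $sB(i) = \EE[\hat S(1-q^{\hat I})]$, where, given $\Theta$, the vector $(\hat S,\hat I,\hat R)$ is $\mathrm{Multinomial}(\Theta;s,i,r)$; on the other hand $\EE[U^N\mid\Theta] = \EE[\tilde S^N(1-q^{\tilde I^N})\mid\Theta]\ind\{\Theta\leq N\}$, where the hatted sample is replaced by the hypergeometric sample $(\tilde S^N,\tilde I^N,\tilde R^N)$ drawn without replacement from $(S,I,R)$. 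The main obstacle, as flagged in the remark at the end of Section \ref{sec:heuristic_justification}, is to control this hypergeometric-to-multinomial discrepancy uniformly in the composition. My plan is to use the classical coupling that succeeds on the event that $\Theta$ draws with replacement produce no duplicates, yielding
\[
d_{\mathrm{TV}}\!\bigl(\mathrm{Hyp}(\theta;S,I,R),\ \mathrm{Mult}(\theta;s,i,r)\bigr) \leq \theta(\theta-1)/(2N),
\]
uniformly in $(s,i,r)$, together with the crude bound $\tilde S^N(1-q^{\tilde I^N}) \leq \theta$, to obtain a pointwise difference of order $\theta^3/N$ for each fixed $\theta$. Since only $\EE[\Theta^2]$ is assumed finite, this cannot be summed directly in $\theta$; I would instead truncate at a level $M$, controlling the tail $\theta>M$ by $\EE[\Theta\ind\{\Theta>M\}]$ (which vanishes as $M\to\infty$ by integrability of $\Theta$) and sending $N\to\infty$ afterwards for each fixed $M$. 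The resulting bound
\[
\sup_{(s,i,r)}\bigl|\EE[U^N]-sB(i)\bigr| \leq M^3/N + 2\EE[\Theta\ind\{\Theta>M\}]
\]
is uniform in the composition and produces the desired convergence. Theorem \ref{thm:LLN_general} then delivers the conclusion.
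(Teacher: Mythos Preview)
Your proposal is correct and follows the same architecture as the paper: apply Theorem~\ref{thm:LLN_general} with the identical dominating constants $\bar\lambda_{\ell_0}=\gamma$, $\bar\lambda_{\ell_k}=\mu\PP[\Theta\geq k]$, and reduce hypothesis~\ref{thm:LLN_general-ii} to a hypergeometric--multinomial comparison via the standard no-repeats coupling. The only difference is in how the final estimate is organised. The paper bounds the total-variation distance between the \emph{full} laws $\law(W^N)$ and $\law(W)$ (with $\Theta$ already integrated out) by $2\bigl(1-\EE\bigl[\prod_{j=0}^{\Theta-1}(1-j/N)\bigr]\bigr)$ and then appeals to dominated convergence; you instead condition on $\Theta=\theta$, use the explicit bound $\theta(\theta-1)/(2N)$, and truncate at level $M$ to handle the tail since $\EE[\Theta^3]$ is not assumed finite. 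Your version is arguably cleaner: the paper's Step~4 invokes $\|\phi\|_\infty$ for $\phi(x,y,z)=xq^y$, which is not literally bounded on the support of $\law(W)$ when $\Theta$ is unbounded, whereas your conditioning makes the relevant function bounded by $\theta$ on each slice and the truncation absorbs the unboundedness explicitly.
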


\begin{proof}
	Let us fix some notation. Denote $W^N = (\tilde{S}^N, \tilde{I}^N, \tilde{R}^N)$ (respectively, $W = (\tilde{S}, \tilde{I}, \tilde{R})$) the number of selected susceptible, infected, and recovered individuals in a gathering, drawn at random, without replacement, from a finite population with sizes $(S,I,R) \in \{0,\ldots,N\}^3$, $S+I+R=N$ (respectively, infinite population with proportions $(s,i,r) = (\frac{S}{N}, \frac{I}{N}, \frac{R}{N})$). Given the value of $\Theta$, it is clear that $W^N$ has multivariate hypergeometric distribution, whereas $W$ is multinomial. Also, $U^N$ (respectively, $U$) denotes the number of new infections in the gathering in the finite case (respectively, infinite).
	
	We will apply Theorem \ref{thm:LLN_general}, for which we need to check that conditions \ref{thm:LLN_general-i} and \ref{thm:LLN_general-ii} are satisfied. We adopt the general notation of Section \ref{sec:LLN_general}.
	We split the proof in steps.
	
	\medskip
	\textit{Step 1.}
	We first check  \ref{thm:LLN_general-i}. With the notation of Example \ref{ex:SIR_N}, we only need to find uniform (in $N$) bounds for the rate functions $\lambda_{\ell_0}^N$ and $\lambda_{\ell_k}^N$, $k=1,\ldots,N$. For the former, we have
	\[
	\lambda_{\ell_0}^N(S,I,R)
	= \gamma \frac{I}{N}
	\leq \gamma
	=: \bar{\lambda}_{\ell_0},
	\]
	whereas for $k=1,\ldots,N$,
	\begin{align*}
	\lambda_{\ell_k}^N(S,I,R)
	&= \mu \PP[U^N = k] \\
	&= \mu \sum_{\theta = k}^N \PP[U^N = k \mid \Theta = \theta] \PP[\Theta = \theta] \\
	&\leq \mu \PP[\Theta \geq k] \\
	&=: \bar{\lambda}_{\ell_k}.
	\end{align*}
	Consequently:
	\begin{align*}
	\sum_{\ell \in \ZZ^d} |\ell| \bar{\lambda}_\ell^N
	&= |\ell_0| \gamma + \mu \sum_{k=1}^\infty |\ell_k| \PP[\Theta \geq k] \\
	&\leq 2 \gamma + 2 \mu \sum_{k=1}^\infty k \PP[\Theta \geq k] \\
	&= 2 \gamma + 2 \mu \EE\left[ \frac{\Theta(\Theta+1)}{2} \right],
	\end{align*}
	which is finite thanks to the assumption $\EE[\Theta^2] < \infty$. Thus, condition \ref{thm:LLN_general-i} is satisfied.
	
	\medskip
	\textit{Step 2.}
	We now aim to check \ref{thm:LLN_general-ii}. From Example \ref{ex:SIR_N} and the definition of $F^N$ \eqref{eq:FN}, we see that
	\[
	F^N(s,i,r)
	= (-\mu \EE[U^N], \mu \EE[U^N] - \gamma i, \gamma i),
	\]
	and the natural candidate for the limiting function $F$ is
	\begin{align*}
	F(s,i,r)
	&= (-\mu \EE[U], \mu \EE[U] - \gamma i, \gamma i) \\
	&= (-\mu s B(i), \mu s B(i) - \gamma i, \gamma i),
	\end{align*}
	where the last equality was verified in Section \ref{sec:heuristic_justification}. Therefore, \eqref{eq:zt} in the present setting is just \eqref{eq:extended_SIR_random} written in integral form. As in \eqref{eq:EEUmidW}, we have
	\begin{align*}
	\EE[U^N]
	&= \EE[\tilde{S}^N] - \EE[\tilde{S}^N q^{\tilde{I}^N}], \\
	\EE[U]
	&= \EE[\tilde{S}] - \EE[\tilde{S} q^{\tilde{I}}],
	\end{align*}
	where $q=1-p$. Thus,
	\begin{equation}
	\label{eq:FN-F}
	|F^N(s,i,r) - F(s,i,r)|
	\leq 2\mu \left|\EE[\tilde{S}^N] - \EE[\tilde{S}]\right| + 2\mu \left|\EE[\tilde{S}^N q^{\tilde{I}^N}] - \EE[\tilde{S} q^{\tilde{I}}]\right|.
	\end{equation}
	Consequently, to check condition \ref{thm:LLN_general-ii}, it suffices to show that those two terms converge to 0 as $N\to\infty$ uniformly on $(s,i,r)$.
	
	\medskip
	\textit{Step 3.} We start with the first term of \eqref{eq:FN-F}. Clearly, given $\Theta = \theta \leq N$, we have $\tilde{S}^N \sim \text{hypergeom}(N,S,\theta)$, thus
	\[
	\EE[\tilde{S}^N]
	= \sum_{\theta=1}^N \EE[\tilde{S}^N \mid \Theta = \theta]\PP[\Theta=\theta]
	= \sum_{\theta=1}^N \frac{S \theta}{N} \PP[\Theta=\theta]
	= s \EE[\Theta \ind\{\Theta \leq N\}],
	\]
	and since $\EE[\tilde{S}] = s \EE[\Theta]$, we obtain
	\begin{equation}
	\label{eq:ESN-ES}
	\left|\EE[\tilde{S}^N] - \EE[\tilde{S}]\right|
	\leq \EE[\Theta \ind\{\Theta > N\}].
	\end{equation}
	
	\medskip
	\textit{Step 4.}
	We now study the second term in  \eqref{eq:FN-F}; this is the key part of the proof. Denote $\nu^N = \law(W^N)$, $\nu = \law(W)$. Since the function $\phi(x,y,z) = x q^y$ is bounded, we have
	\begin{equation}
	\label{eq:ESNqIN-ESqI}
	\left|\EE[\tilde{S}^N q^{\tilde{I}^N}] - \EE[\tilde{S} q^{\tilde{I}}]\right|
	\leq \Vert \phi \Vert_{\infty} \Vert \nu^N - \nu \Vert_{\TV},
	\end{equation}
	where $\Vert \cdot \Vert_{\TV}$ denotes the total variation norm. It is defined as
	\[
	\Vert \nu^N - \nu \Vert_{\TV}
	= 2 \inf \PP[W^N \neq W],
	\]
	where the infimum is taken over all couplings, that is, over all possible ways of defining $W^N$ and $W$ on a common probability space, with $W^N \sim \nu^N$ and $W \sim \nu$. We will now define one particular coupling: sample $\Theta$ as usual, and then draw $\Theta$ elements from the set $\{1,\ldots,N\}$ \emph{with replacement}; now, define $W = (\tilde{S}, \tilde{I}, \tilde{R})$ as
	\begin{align*}
	\tilde{S} &= \text{number of elements in $\{1,\ldots,S\}$}, \\
	\tilde{I} &= \text{number of elements in $\{S+1,\ldots,S+I\}$}, \\
	\tilde{R} &= \text{number of elements in $\{S+I+1,\ldots,N\}$},
	\end{align*}
	and
	\[
	W^N
	= \begin{cases}
	W & \text{if $\Theta\leq N$ and there were no repeated elements}, \\
	\hat{W}^N & \text{if $\Theta\leq N$ and some element was repeated}, \\
	(0,0,0) & \text{if $\Theta>N$},
	\end{cases}
	\]
	where $\hat{W}^N$ is some independent realization of $\nu^N$. It is clear that $W^N \sim \nu^N$ and $W \sim \nu$. Moreover:
	\begin{align*}
	\PP[W^N = W]
	&= \sum_{\theta=1}^N \PP[W^N = W \mid \Theta = \theta] \PP[\Theta=\theta] \\
	&\geq \sum_{\theta=1}^N \PP[\text{no repeated elements} \mid \Theta = \theta] \PP[\Theta=\theta] \\
	&= \sum_{\theta=1}^N \frac{N}{N} \frac{N-1}{N} \cdots \frac{N-\theta+1}{N}  \PP[\Theta=\theta] \\
	&= \EE\left[\frac{N}{N} \frac{N-1}{N} \cdots \frac{N-\Theta+1}{N} \right],
	\end{align*}
	thus
	\begin{equation}
	\label{eq:TVnuN-nu}
	\frac{1}{2} \Vert \nu^N - \nu \Vert_{\TV}
	\leq 1 - \PP[W^N = W]
	\leq 1 - \EE\left[\frac{N}{N} \frac{N-1}{N} \cdots \frac{N-\Theta+1}{N} \right].
	\end{equation}
	
	\medskip
	\textit{Step 5.} Finally, from \eqref{eq:FN-F}, \eqref{eq:ESN-ES}, \eqref{eq:ESNqIN-ESqI} and \eqref{eq:TVnuN-nu}, we obtain
	\begin{align*}
	&|F^N(s,i,r) - F(s,i,r)| \\
	&\leq 2\mu \EE[\Theta \ind\{\Theta > N\}]
	+ 4\mu \Vert \phi \Vert_\infty \left\{ 1 - \EE\left[\frac{N}{N} \frac{N-1}{N} \cdots \frac{N-\Theta+1}{N} \right] \right\}.
	\end{align*}
	Both terms go to 0 as $N\to\infty$, the first one by monotone convergence and the second one by dominated convergence. Noting that this does not depend on $(s,i,r)$, taking supremum over $(s,i,r) \in \{0,\frac{1}{N},\ldots,1\}^3$, shows that $\lim_N \Vert F^N - F \Vert_\infty = 0$ (the norm $\Vert \cdot \Vert_\infty$ was defined in \eqref{eq:infty_norm}). Thus, \ref{thm:LLN_general-ii} is checked and the proof is complete.
\end{proof}

\bibliographystyle{plain}
\bibliography{references.bib}{}

\begin{thebibliography}{10}

\bibitem{andersson-britton2000}
H{\aa}kan Andersson and Tom Britton.
\newblock {\em Stochastic epidemic models and their statistical analysis},
  volume 151 of {\em Lecture Notes in Statistics}.
\newblock Springer-Verlag, New York, 2000.

\bibitem{armbruster-beck2017}
Benjamin Armbruster and Ekkehard Beck.
\newblock Elementary proof of convergence to the mean-field model for the {SIR}
  process.
\newblock {\em J. Math. Biol.}, 75(2):327--339, 2017.

\bibitem{bruckhaus-martinez-garner-larocca-duncan2021}
Alexander Bruckhaus, Aubrey Martinez, Rachael Garner, Marianna La~Rocca, and
  Dominique Duncan.
\newblock {Post-lockdown infection rates of COVID-19 following the reopening of
  public businesses}.
\newblock {\em Journal of Public Health}, 08 2021.
\newblock fdab325.

\bibitem{cabrera-cordovalepe-gutierrezjara-vogtgeisse2021}
M.~Cabrera, F.~C{\'o}rdova-Lepe, J.~P. Guti{\'e}rrez-Jara, and K.~Vogt-Geisse.
\newblock An {SIR}-type epidemiological model that integrates social distancing
  as a dynamic law based on point prevalence and socio-behavioral factors.
\newblock {\em Sci Rep}, 11(10170), 2021.

\bibitem{capasso-serio1978}
Vincenzo Capasso and Gabriella Serio.
\newblock A generalization of the {K}ermack-{M}c{K}endrick deterministic
  epidemic model.
\newblock {\em Mathematical Biosciences}, 42(1):43--61, 1978.

\bibitem{cotta-naveiracotta-magal2020}
Renato~M. Cotta, Carolina~P. Naveira-Cotta, and Pierre Magal.
\newblock Mathematical parameters of the {COVID}-19 epidemic in {B}razil and
  evaluation of the impact of different public health measures.
\newblock {\em Biology}, 9(8), 2020.

\bibitem{ethier-kurtz}
Stewart~N. Ethier and Thomas~G. Kurtz.
\newblock {\em Markov processes}.
\newblock Wiley Series in Probability and Mathematical Statistics: Probability
  and Mathematical Statistics. John Wiley \& Sons Inc., New York, 1986.
\newblock Characterization and convergence.

\bibitem{hethcote2000}
Herbert~W. Hethcote.
\newblock The mathematics of infectious diseases.
\newblock {\em SIAM Rev.}, 42(4):599--653, 2000.

\bibitem{kermack-mckendrick1927}
W.~O. Kermack and A.~G. McKendrick.
\newblock A contribution to the mathematical theory of epidemics.
\newblock {\em Proceedings of the Royal Society of London. Series A, Containing
  Papers of a Mathematical and Physical Character}, 115(772):700--721, 1927.

\bibitem{kolokolnikov-iron2021}
Theodore Kolokolnikov and David Iron.
\newblock Law of mass action and saturation in {SIR} model with application to
  {C}oronavirus modelling.
\newblock {\em Infectious Disease Modelling}, 6:91--97, 2021.

\bibitem{kurtz1970}
Thomas~G. Kurtz.
\newblock Solutions of ordinary differential equations as limits of pure jump
  {M}arkov processes.
\newblock {\em J. Appl. Probability}, 7:49--58, 1970.

\bibitem{liu-hethcote-levin1987}
Wei~Min Liu, Herbert~W. Hethcote, and Simon~A. Levin.
\newblock Dynamical behavior of epidemiological models with nonlinear incidence
  rates.
\newblock {\em J. Math. Biol.}, 25(4):359--380, 1987.

\bibitem{liu-levin-iwasa1986}
Wei~Min Liu, Simon~A. Levin, and Yoh Iwasa.
\newblock Influence of nonlinear incidence rates upon the behavior of {SIRS}
  epidemiological models.
\newblock {\em J. Math. Biol.}, 23(2):187--204, 1986.

\bibitem{perra2021}
Nicola Perra.
\newblock Non-pharmaceutical interventions during the {COVID}-19 pandemic: {A}
  review.
\newblock {\em Physics Reports}, 913:1--52, 2021.
\newblock Non-pharmaceutical interventions during the COVID-19 pandemic: a
  review.

\bibitem{rennie1960}
Basil~C. Rennie.
\newblock On dominated convergence.
\newblock {\em J. Austral. Math. Soc.}, 2:133--136, 1961/1962.

\bibitem{wangping-etal2020}
Jia Wangping, Han Ke, Song Yang, Cao Wenzhe, Wang Shengshu, Yang Shanshan, Wang
  Jianwei, Kou Fuyin, Tai Penggang, Li~Jing, Liu Miao, and He~Yao.
\newblock Extended {SIR} prediction of the epidemics trend of {COVID}-19 in
  {I}taly and compared with {H}unan, {C}hina.
\newblock {\em Frontiers in Medicine}, 7:169, 2020.

\end{thebibliography}

\end{document}